\documentclass[12pt]{article}

\usepackage{amsmath,amssymb,amsthm,amscd}
\usepackage[colorlinks, linkcolor=blue, anchorcolor=gree, citecolor=red]{hyperref}
\usepackage[numbers,sort&compress]{natbib}
\usepackage{graphicx}

\textheight 216mm   
\textwidth 152mm    

\oddsidemargin 3.6mm    
\topmargin -10mm    
\makeatletter

\newcommand{\Rmnum}[1]{\expandafter\@slowromancap\romannumeral #1@}
\makeatother

\begin{document}

	\newcommand{\Cyc}{{\rm{Cyc}}}\newcommand{\diam}{{\rm{diam}}}
	\newcommand{\Cay}{{\rm Cay}}
	\newtheorem{thm}{Theorem}[section]
	\newtheorem{pro}[thm]{Proposition}
	\newtheorem{lem}[thm]{Lemma}
	\newtheorem{exa}[thm]{Example}
	\newtheorem{fac}[thm]{Fact}
	\newtheorem{cor}[thm]{Corollary}
	\newtheorem{constr}[thm]{Construction}
	\theoremstyle{definition}
	\newtheorem{ex}[thm]{Example}
	\newtheorem{ob}[thm]{Observtion}
	\newtheorem{remark}[thm]{Remark}
	\newcounter{foo}[subsection]
	\newcounter{fooo}[section]
	\newtheorem{step}[foo]{Step}
	\newtheorem{stepp}[fooo]{Step}
	\newcommand{\bth}{\begin{thm}}
		\renewcommand{\eth}{\end{thm}}
	\newcommand{\bex}{\begin{ex}}
		\newcommand{\eex}{\end{ex}}
	\newcommand{\bre}{\begin{remark}}
		\newcommand{\ere}{\end{remark}}
	
	\newcommand{\bal}{\begin{aligned}}
		\newcommand{\eal}{\end{aligned}}
	\newcommand{\beq}{\begin{equation}}
		\newcommand{\eeq}{\end{equation}}
	\newcommand{\ben}{\begin{equation*}}
		\newcommand{\een}{\end{equation*}}
	
	\newcommand{\bpf}{\begin{proof}}
		\newcommand{\epf}{\end{proof}}
	\renewcommand{\thefootnote}{}
	\newcommand{\sdim}{{\rm sdim}}
	
	\def\beql#1{\begin{equation}\label{#1}}
		\title{\Large\bf Perfect codes in 2-valent Cayley digraphs on abelian groups}
		\author{{Shilong Yu$^{1}$,\quad Yuefeng Yang$^{1,}$\footnote{Corresponding author.}, \quad Yushuang Fan$^{1}$,\quad Xuanlong Ma$^{2}$
			}\\[15pt]
			{\small\em $^1$School of Science, China University of Geosciences, Beijing 100083, China}\\
			{\small\em $^2$School of Science, Xi'an Shiyou University, Xi'an 710065, China}\\
		}

	\date{}
	
	\maketitle

	\begin{abstract}
		For a digraph $\Gamma$, a subset $C$ of $V(\Gamma)$ is a perfect code if $C$ is a dominating set such that every vertex of $\Gamma$ is dominated by exactly one vertex in $C$. In this paper, we classify strongly connected 2-valent Cayley digraphs on abelian groups admitting a perfect code, and   determine completely all perfect codes of such digraphs.

	\end{abstract}
	
	
	{\em Keywords:} Perfect code; Cayley digraph; abelian group
	
	\medskip
	
	{\em MSC 2010:} 05C25, 05C690, 94B25
	\footnote{E-mail addresses: 2019220002@email.cugb.edu.cn (S. Yu), yangyf@cugb.edu.cn (Y. Yang),  fys@cugb.edu.cn (Y. Fan), xuanlma@xsyu.edu.cn (X. Ma).}
	\section{Introduction}
	
 A digraph $ \Gamma$ is a pair $(V(\Gamma), A(\Gamma))$ where $V(\Gamma)$ is a finite  nonempty set of vertices and $A(\Gamma)$ is a set of ordered pairs $(arcs)~(x, y)$ with distinct vertices $x, y$. $\Gamma$ is an  {\em undirected graph} or a {\em graph} if $A(\Gamma)$ is a symmetric relation. A {\em path} of length $r$ from $u$ to $v$ is a finite sequence of vertices $(u=\omega_0,\omega_1,...,\omega_r=v)$ such that $(\omega_{t-1},\omega_t) \in A(\Gamma)$ for $t=1,2,...,r$. A digraph is said to be {\em strongly connected} if, for any two distinct vertices $x$ and $y$, there is a path from $x$ to $y$. We say that a vertex $x$ is {\em adjacent} to $y$ if $(x,y)\in A(\Gamma)$. In this case, we also call $ y$ an {\em out-neighbour} of $ x$, and $x$ an {\em in-neighbour} of $y$. The set of all out-neighbours of $x$ is denoted by $N^{+}(x)$, while the set of in-neighbours is $N^{-}(x)$. A digraph is said to be {\em regular of valency} $k$ if the number of in-neighbour and out-neighbour of all vertices are all equal to $k$.

Let $\Gamma$ be a digraph. A vertex $u$ {\em dominates} a vertex $v$ if either $u=v$ or $(u, v) \in A(\Gamma)$. A set $C \subseteq V(\Gamma)$ is called a {\em dominating set} of the digraph $\Gamma$ if every vertex of $V(\Gamma)$ is dominated by a vertex in $C$. If $C$ is a dominating set of $\Gamma$ such that every vertex of $\Gamma$ is dominated by exactly one vertex in $C$, then $C$ is called a {\em perfect code}.  In some references, a perfect code is also called an {\em efficient dominating set} \cite{DYP,DYP2,DeS} or {\em independent perfect dominating set} \cite{Le}.

Let $G$ be a finite, multiplicatively written, group with identity $e$, and $S$ be a subset of $G$ without identity. A {\em Cayley digraph} of a group $G$ with respect to the set $S$, denoted by $\Cay(G,S)$, is the digraph with vertex set $G$, where $(x,y)$ is an arc whenever $yx^{-1} \in S$.

In the past few years, perfect codes in Cayley graphs have attracted considerable attention.
 In \cite{OPR}, cubic and quartic circulants (that is, Cayley graphs on cyclic groups) admitting a perfect code were classified. \c{C}al{\i}\c{s}kan, Miklavi\v{c} and \"{O}zkan characterized cubic and quartic Cayley
graphs on abelian groups that admit a perfect code \cite{CMO}. Kwon, Lee and Sohn gave necessary and sufficient conditions for the existence of perfect codes of quintic circulants and classified these perfect codes \cite{KLS}. The second author, the fourth author and Zeng \cite{5du} classified all connected quintic Cayley graphs on abelian groups that admit a perfect code, and determined completely all perfect codes of such graphs. \c{C}al{\i}\c{s}kan, Miklavi\v{c} and \"{O}zkan classified the connected cubic Cayley graphs on
generalized dihedral groups which admit a perfect code \cite{CMO22}. For more about perfect codes in Cayley graphs, see for examples \cite{FHZ,Le,Z15,HXZ18,DeS,CWZ,MWWZ,Mo,ZZ20,ZZ21}.

In this paper, we study 2-valent Cayley
digraphs on abelian groups admitting a perfect code. The following theorem classifies 2-valent Cayley digraphs on abelian groups admitting a perfect code.

\begin{thm}\label{main}
	A strongly connected 2-valent Cayley digraph on an abelian group admits a perfect	code if and only if it is isomorphic to $\Gamma_{m,l,h}$, where $\Gamma_{m,l,h}$ is from Construction \ref{construcution} with $0\leq h<m$, $0<l$, $3\mid m$ and $3\mid(l-h)$.
\end{thm}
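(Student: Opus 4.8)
The plan is to first recast the perfect-code condition as a tiling identity and then exploit the abelian structure by slicing. Since $\Gamma=\Cay(G,S)$ is strongly connected, $S=\{s_1,s_2\}$ generates $G$, and a subset $C\subseteq G$ is a perfect code exactly when $C\oplus\{0,s_1,s_2\}=G$, i.e.\ the translates of $\{0,s_1,s_2\}$ by $C$ partition $G$; in particular $3\mid|G|$. For the necessity direction I would put $\Gamma$ into the normal form of Construction~\ref{construcution}: writing $G=\mathbb Z^2/\Lambda$ with $s_1,s_2$ the images of the standard basis, pick a Hermite-type basis of $\Lambda$ so that $G$ is presented by $(m,l,h)$ with $0\le h<m$, $0<l$, where $\langle s_1\rangle\cong\mathbb Z_m$, $G/\langle s_1\rangle\cong\mathbb Z_l$, and $h$ records the extension (the ``twist''). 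This identifies $\Gamma\cong\Gamma_{m,l,h}$ and is essentially bookkeeping with $\mathrm{Aut}(G)$ and admissible base changes of $\Lambda$.

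Next I slice a hypothetical perfect code $C$ along the $l$ cosets of $\langle s_1\rangle$: putting $A_j=\{i\in\mathbb Z_m:(i,j)\in C\}$, the tiling identity becomes, for each $j\in\mathbb Z_l$, a local identity of the form $\bigl(A_j\cup(A_j+1)\bigr)\sqcup(A_{j-1}+h)=\mathbb Z_m$ (the twist by $h$ enters only at the wrap-around). Two observations pin the structure down. First, $A_j\cap(A_j+1)=\emptyset$, so each $A_j$ has no two cyclically consecutive elements; together with the counting recurrence $2|A_j|+|A_{j-1}|=m$ around the $l$-cycle, whose only consistent value is $m/3$ (the relevant affine map has slope $(-1/2)^l\neq 1$), this forces $3\mid m$ and $|A_j|=m/3$ for every $j$. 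Second, feeding this back, one checks that the complement of $A_j\cup(A_j+1)$ is a union of runs of lengths $g_i-2$ over the gaps $g_i$ of $A_j$; since $A_{j-1}$ must again be gap-free of size $m/3$, every gap of $A_j$ lies in $\{2,3\}$, and averaging gives all gaps equal to $3$. Hence each $A_j$ is a coset $c_j+\langle 3\rangle$ of the index-$3$ subgroup $\langle 3\rangle\le\mathbb Z_m$, the local identities give $c_j=c_{j-1}+1$ at non-wrap indices, and closing up the $l$-cycle (where the wrap contributes the extra shift by $h$) forces $l\equiv h\pmod 3$, i.e.\ $3\mid(l-h)$. Equivalently, $C$ is a fibre of the homomorphism $\phi\colon(i,j)\mapsto i-j\bmod 3$, which is well defined precisely when $3\mid m$ and $3\mid(l-h)$; this simultaneously shows that the perfect codes are exactly the three cosets of $\ker\phi$, settling the ``determine all perfect codes'' part.

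For sufficiency I run this in reverse: given $\Gamma_{m,l,h}$ with $3\mid m$, $3\mid(l-h)$, the map $\phi$ above is a well-defined surjection onto $\mathbb Z_3$ with $\{\phi(s_1),\phi(s_2)\}=\{1,2\}$, so each $\phi^{-1}(t)$ satisfies $\phi^{-1}(t)\oplus\{0,s_1,s_2\}=G$ because $\{0,\phi(s_1),\phi(s_2)\}=\mathbb Z_3$; this is immediate. The step I expect to be the main obstacle is the interaction between the parameter triple and isomorphism classes: one must show that the arithmetic hypothesis $\sigma_p(\gcd(m,h))\in\{\sigma_p(m),\sigma_p(m-h)\}$ for all $p\in P(ml)$ is exactly what is needed for $(m,l,h)$ to describe a bona fide (strongly connected) member of the family in the intended normal form, so that the ``if and only if'' is clean and no isomorphism class is counted twice or missed. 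Carrying this out is a prime-by-prime analysis of the Smith/Hermite data of $\Lambda$ under the admissible base changes, and that is where the bulk of the technical casework will lie; the tiling/slicing core above is comparatively routine once the normal form is fixed.
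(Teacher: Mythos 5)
Your combinatorial core is correct but takes a genuinely different route from the paper. You slice a hypothetical perfect code $C$ into the sets $A_j=\{i:(i,j)\in C\}$ and extract everything from the exact tiling identity $A_j\sqcup(A_j+1)\sqcup A_{j-1}=\mathbb{Z}_m$: the counting recurrence $2|A_j|+|A_{j-1}|=m$ around the $l$-cycle forces $|A_j|=m/3$ and hence $3\mid m$, the gap analysis forces each $A_j$ to be a coset of $\langle 3\rangle$, and closing the cycle through the $h$-twist forces $3\mid(l-h)$. The paper instead proves a chain of local neighbourhood lemmas (Lemmas \ref{lem1}--\ref{lem4}): $S$ has no involution, $C$ propagates along the diagonal $x(i,j)\mapsto x(i+1,j+1)$, $C$ is $3$-periodic in each coordinate direction, and any relation $x(h,l)=x(0,0)$ then yields $3\mid(l-h)$. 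Both routes land on the same structure (that $C$ is a coset of the kernel of $(i,j)\mapsto i-j \bmod 3$), but yours delivers the sufficiency of the perfect-code existence and the classification of all perfect codes (Theorem \ref{main2}) in one stroke via the homomorphism $\phi$, where the paper proves sufficiency by the direct verification of Proposition \ref{pro2} and Theorem \ref{main2} by a further coset argument; your route also makes Lemma \ref{lem1} unnecessary, since $3\mid o(s)$ (and, by the symmetric slicing, $3\mid o(s')$) already rules out involutions.

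The one genuine gap is the $\sigma_p$ condition. You defer it as ``technical casework'' and frame it as what is needed for $(m,l,h)$ to describe a strongly connected member of the family in normal form with no isomorphism class counted twice; that is not its role. The digraph $\Gamma_{m,l,h}$ is well defined and strongly connected for every admissible triple, and the theorem makes no uniqueness claim about parameters. The condition is the criterion for $\Gamma_{m,l,h}$ to be a Cayley digraph on an \emph{abelian} group: the paper uses it in the sufficiency direction as Proposition \ref{pro1} (imported from \cite[Proposition 2.3]{5du}) and, in the necessity direction, verifies it for the parameters $(o(s),l,h)$ extracted from $\Cay(G,S)$ via the proof of \cite[Lemma 3.2]{5du}. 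If you intend to reprove rather than cite this, your Hermite-basis setup is the natural starting point, but as written this step is absent rather than routine, so the ``if and only if'' is not yet closed.
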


For the rest of this paper, we always assume that $G$ is an abelian group with identity $0$, written additively. For $a\in G$, let $o(a)$ denote the {\em order} of $a$, that is, the smallest positive integer $m$ such that $ma=0$. An element $a$ is called an {\em involution} if $o(a)=2$. The second main result determines all perfect codes in 2-valent Cayley digraphs on abelian groups.

\begin{thm}\label{main2}
	With the notations in Theorem \ref{main}, suppose that $\Cay(G, S)$ is isomorphic to the digraph in Theorem \ref{main}, where $S=\left\{s, s^{\prime}\right\}$ and $o(s)=m$.
	Then all perfect codes containing identity are exactly $	\bigcup_{i\in [gcd(l-h,m)/3]}D(i)$, where $D\left(i\right)=\left\{(3i+r) s+r s^{\prime} \mid r \in \mathbb{Z}\right\}$.
\end{thm}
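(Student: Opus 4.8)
The plan is to determine the shape of any perfect code containing the identity by a soft argument, and to invoke the explicit description of $\Gamma_{m,l,h}$ only at the very end. I start from the tiling reformulation: a set $C\ni 0$ is a perfect code of $\Cay(G,S)$ if and only if $C+\{0,s,s'\}=G$ with every element of $G$ represented exactly once; equivalently, for every $g\in G$ exactly one of $g,\ g-s,\ g-s'$ lies in $C$. Since $\Cay(G,S)$ is vertex-transitive, every perfect code is a translate of one containing $0$, so it suffices to analyze those. The first structural step is: if $c\in C$ then $c+s\notin C$ and $c+s'\notin C$, since otherwise $c+s$ (resp.\ $c+s'$) would be dominated both by $c$ and by itself; consequently $c+s+s'\in C$, because the only in-neighbours of $c+s+s'$ are $c+s$ and $c+s'$, neither of which lies in $C$, while $c+s+s'$ must be dominated by some element of $C$. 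Iterating and using that $s+s'$ has finite order, $C$ is invariant under adding $\pm(s+s')$, i.e.\ $C$ is a union of cosets of $H:=\langle s+s'\rangle$.

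Next I pass to $\pi\colon G\to\overline G:=G/H$, writing $\bar x=\pi(x)$. Then $\overline{s'}=-\bar s$, and since $C=\pi^{-1}(\pi(C))$ the tiling condition descends: for every $\bar g\in\overline G$ exactly one of $\bar g,\ \bar g-\bar s,\ \bar g+\bar s$ lies in $\overline C:=\pi(C)$. Because $G=\langle s,s'\rangle$, the quotient $\overline G=\langle\bar s,\overline{s'}\rangle=\langle\bar s\rangle$ is cyclic, say $\overline G\cong\mathbb{Z}_N$ with $N=o(\bar s)=\min\{k>0:ks\in H\}$; under this identification $\overline C$ is precisely a perfect code of the undirected cycle $C_N$ (the tile being $\{-1,0,1\}$) containing $0$. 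Such codes exist if and only if $3\mid N$, and then the unique one is $3\mathbb{Z}_N=\langle 3\bar s\rangle$. Pulling back, $C=\pi^{-1}(\langle 3\bar s\rangle)=\langle 3s\rangle+H$; in particular the perfect code containing $0$ is unique, every perfect code of $\Cay(G,S)$ is a translate of $K_0:=\langle 3s\rangle+H$, and since a perfect code exists by hypothesis, $K_0$ itself is one, so no separate verification of the ``if'' direction is needed.

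Finally I match $K_0$ with the claimed union. From $D(i)=\{(3i+r)s+rs':r\in\mathbb{Z}\}=3is+H$ we get $K_0=\langle 3s\rangle+H=\bigcup_{i\in\mathbb{Z}}D(i)$, and $D(i)=D(j)$ exactly when $3(i-j)s\in H$, i.e.\ when $(N/3)\mid(i-j)$; hence $[K_0:H]=N/3$, the distinct cosets are $D(0),\dots,D(N/3-1)$, and $K_0=\bigcup_{i\in[N/3]}D(i)$. It remains to compute $N=\min\{k>0:ks\in\langle s+s'\rangle\}$ in terms of the parameters. Using the presentation of $\Gamma_{m,l,h}$ from Construction \ref{construcution} — in which $s,s'$ generate $G$ subject to $ms=0$ and $hs+ls'=0$ — the condition $ks=n(s+s')$ for some $n\in\mathbb{Z}$ unwinds, upon eliminating $n$ via the two defining relations, to $k\in\langle m,\,l-h\rangle=\gcd(m,l-h)\mathbb{Z}$, so $N=\gcd(l-h,m)$, which is divisible by $3$ since $3\mid m$ and $3\mid(l-h)$ (as it must be, given that a perfect code exists). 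Combining the three steps yields that the unique perfect code containing the identity is $\bigcup_{i\in[\gcd(l-h,m)/3]}D(i)$.

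The step I expect to be most delicate is the last one: one must be sure the coset count is exactly $N/3$ (not merely a divisor) and that $3s,6s,\dots,(N-3)s$ are pairwise inequivalent modulo $H$, and one must run the short but parameter-laden computation of $N$ in the explicit model of Construction \ref{construcution}; the number-theoretic hypotheses of Theorem \ref{main} enter here only through the divisibilities $3\mid m$ and $3\mid(l-h)$. By contrast the earlier steps are routine — the reduction to a union of $H$-cosets is a two-line domination argument and the classification of perfect codes of a cycle is classical — the one pleasant point being that $G/\langle s+s'\rangle$ is automatically cyclic, which is exactly what collapses the problem to one dimension.
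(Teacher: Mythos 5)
Your proof is correct, and while its first half coincides with the paper's, the second half is organized along a genuinely different route. The opening step --- that a perfect code containing $0$ is closed under adding $s+s'$, hence is a union of cosets of $\langle s+s'\rangle$ --- is exactly the paper's Lemma \ref{lem2}. From there the paper stays inside $G$: it sets $H=\langle s\rangle\cap\langle s+s'\rangle=\langle(l-h)s\rangle$, uses Lemma \ref{lem3} to force $x(3r,0)+H\subseteq C$, and closes by comparing with the explicit perfect code of Proposition \ref{pro2} (a perfect code of size $|G|/3$ cannot properly contain another). You instead pass to $\overline G=G/\langle s+s'\rangle$, which is cyclic of order $N$ generated by $\bar s$ because $\overline{s'}=-\bar s$, and reduce the whole problem to the classical uniqueness of perfect codes in the cycle $C_N$; this gives uniqueness and the divisibility $3\mid N$ in one stroke, and your observation that $K_0$ must itself be a perfect code --- since one exists and all of them coincide up to translation --- lets you bypass Proposition \ref{pro2} entirely. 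The price is the computation $N=\gcd(m,l-h)$, which the paper gets for free from $H=\langle(l-h)s\rangle$; your lattice argument is valid because the relation lattice of $(s,s')$ is exactly $\langle(m,0),(h,l)\rangle$, its index in $\mathbb{Z}^2$ being $ml=|G|$ since $l=[G:\langle s\rangle]$. Two points deserve to be made explicit: the descent of the exact-domination condition to $\overline G$ requires that $\bar g$, $\bar g-\bar s$, $\bar g+\bar s$ be three distinct elements, which holds since $3\mid m$ and $3\mid(l-h)$ force $3\mid N$ and hence $N\geq 3$; and the identification $D(i)=3is+\langle s+s'\rangle$, which you do verify. On balance your packaging is cleaner and more self-contained than the paper's rather terse final sentence, at the cost of one explicit relation-lattice computation.
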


The paper is organized as follows.
In Section 2, we construct an infinite family of  Cayley digraphs on abelian groups admitting a perfect code. In Section 3, we give the proofs of Theorems \ref{main} and \ref{main2}.

\section{Constructions}
In this section, we construct an infinite family of 2-valent Cayley digraphs on abelian groups admitting a perfect code. The main idea for the construction is taken from \cite[Construction 3]{YYF16}.

In the remainder of this section, we always assume that $m$ and $l$ are positive integers and $h$ is a nonnegative integer less than $m$. For a positive integer $n$, denote by $[n]$ the set $\{0,1,\ldots,n-1\}$.
\begin{constr}\label{construcution}
	Let $\Gamma_{m,l,h}$ be the digraph with the vertex set $\mathbb{Z}_m\times[l]$ whose arc set consists of $((a,b),(a+1,b))$, $((a,c),(a,c+1))$ and $((a,-1),(a-h,0))$, where $a\in\mathbb{Z}_m$, $b\in[l]$ and $c\in[l-1]$. See Figure \ref{fig:InformativeFigure}.
\end{constr}

For each integer $i$, let $\overline{i}$ denote the residue class $i+n\mathbb{Z}$, and $\hat{i}$ be the minimal nonnegative integer in $\overline{i}$. In $\mathbb{Z}_n$, we write $i$ instead of $\overline{i}$.

\begin{remark}\label{1}
	Let $j\in\mathbb{Z}$. Then there exist $n\in\mathbb{Z}$ and $r\in [l]$ such that $j=nl+r$. 	Similar to \cite[Remark 2.2]{5du}, by Construction \ref{construcution}, we may use $(i,j)$ to denote the vertex $(\hat{i}-nh,r)$ of $V(\Gamma_{m,l,h})$ for all $i\in\mathbb{Z}_m$.

\end{remark}
\begin{figure}[!h]
	\begin{center}
		\includegraphics{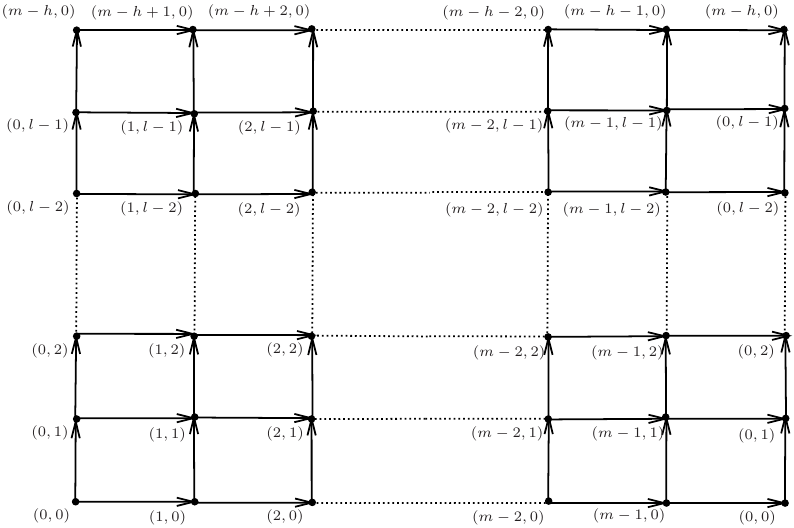}
	\end{center}
	\caption{\label{fig:InformativeFigure} The digraph $\Gamma_{m,l,h}$.}
\end{figure}
The following result is immediate from \cite[Proposition 2.3]{5du}.
\begin{pro}\label{pro1}
	  The digraph $\Gamma_{m,l,h}$ is a Cayley digraph on an abelian group.
\end{pro}

\begin{pro} \label{pro2}
	Let $3\mid m$ and $3\mid(l-h)$. Then $\cup_{r\in[{\rm gcd}(l-h,m)/3]}C_r$ is a perfect code of $\Gamma_{m,l,h}$, where
	\begin{align}
		C_r=\{(3r+j,j)\in\mathbb{Z}_m\times [l]\mid j\in\mathbb{Z}\}.\nonumber
	\end{align}
\end{pro}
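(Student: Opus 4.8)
The plan is to show that $C:=\bigcup_{r\in[\gcd(l-h,m)/3]}C_r$ is a perfect code of $\Gamma_{m,l,h}$ by verifying the two defining properties directly: every vertex is dominated by \emph{at least} one vertex of $C$, and every vertex is dominated by \emph{at most} one vertex of $C$. The key device is Remark \ref{1}, which lets us write an arbitrary vertex of $\Gamma_{m,l,h}$ in the uniform form $(i,j)$ with $i\in\mathbb{Z}_m$ and $j\in\mathbb{Z}$, subject to the identification $(i,j)=(i+l,\,j-l)$ coming from $(i,nl+r)=(\hat i-nh,r)$. In this notation the out-neighbours of $(i,j)$ become simply $(i+1,j)$ (the ``$a\mapsto a+1$'' arcs and, after reindexing, the wrap-around arc) and $(i,j+1)$ (the ``$c\mapsto c+1$'' arcs), so a vertex $(x,y)$ is dominated by $(i,j)\in C$ exactly when $(x,y)\in\{(i,j),(i+1,j),(i,j+1)\}$. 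I would first record this clean local description carefully, including how the wrap-around arc $((a,-1),(a-h,0))$ is subsumed, since that bookkeeping is where the hypothesis $3\mid(l-h)$ will enter.

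Next I would unpack membership in $C$. A vertex $(x,y)$ lies in $C_r$ iff $y\in\mathbb Z$ (always true) and $x\equiv 3r+y\pmod m$, i.e. $x-y\equiv 3r\pmod m$; and ranging $r$ over $[\gcd(l-h,m)/3]$ means $(x,y)\in C$ iff $x-y$ is congruent mod $m$ to one of $0,3,6,\dots,\gcd(l-h,m)-3$. Equivalently, letting $d:=\gcd(l-h,m)$, we have $(x,y)\in C$ iff $x-y\bmod m\in\{0,3,\dots,d-3\}$. I must check this is well defined under the identification $(i,j)=(i+l,j-l)$: replacing $(x,y)$ by $(x+l,y-l)$ changes $x-y$ by $2l$... here is a subtlety — one actually needs to use the \emph{correct} identification, which per Remark \ref{1} sends $(i,nl+r)\mapsto(\hat i-nh,r)$, so the invariant quantity is not $x-y$ but rather something like $x-\frac{h}{l}y$ suitably interpreted; concretely, passing from level $nl+r$ down to level $r$ replaces $x$ by $x-nh$ while $y$ drops by $nl$, so $x-y$ changes by $n(l-h)$, a multiple of $d$, hence $x-y\bmod d$ — and therefore the set $\{0,3,\dots,d-3\}$ of residues mod $d$ that are $\equiv 0\pmod 3$ — is well defined. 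This is the first place $d\mid(l-h)$ and $3\mid d$ (which follows from $3\mid m$ and $3\mid(l-h)$) are used, and I expect this to be the main obstacle: making the ``coordinates of a vertex'' genuinely well defined and checking $C$ respects the identification.

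Granting that, the two verifications are short. For \emph{domination}: given any vertex $(x,y)$, I want $(i,j)\in C$ with $(x,y)$ among $(i,j),(i+1,j),(i,j+1)$. Look at $t:=x-y\bmod d\in\{0,1,\dots,d-1\}$. If $t\equiv 0\pmod 3$ take $(i,j)=(x,y)$; if $t\equiv 2\pmod 3$ take $(i,j)=(x-1,y)$, so that $(x,y)=(i+1,j)$ and $i-j=t-1\equiv 0$; if $t\equiv 1\pmod 3$ take $(i,j)=(x,y-1)$, so $(x,y)=(i,j+1)$ and $i-j\equiv 0\pmod 3$. In each case $(i,j)\in C$ (one must double-check the boundary $t\in\{d-1,d-2\}$ wraps correctly, again using $3\mid d$), so $(x,y)$ is dominated. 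For \emph{uniqueness}: suppose $(x,y)$ is dominated by both $(i_1,j_1),(i_2,j_2)\in C$. Enumerate the (at most nine) cases for which of the three positions each occupies; in every case one derives $|(i_1-j_1)-(i_2-j_2)|\le 2$ as integers while both differences are $\equiv 0\pmod 3$ and both lie in a window of size $d$, forcing $(i_1,j_1)=(i_2,j_2)$. Finally I would remark that $C$ is nonempty and the count works out — $|C|=|V(\Gamma_{m,l,h})|/3=ml/3$ since the $C_r$ are disjoint and each has size $ml/d$ — which is a consistency check that every vertex is hit exactly once. Assembling these verifications completes the proof.
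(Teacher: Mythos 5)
Your proposal follows essentially the same route as the paper: both reduce everything to the residue of the ``difference of coordinates'' modulo $3$ (equivalently modulo $d=\gcd(l-h,m)$) and then verify exact domination by a short case analysis on that residue. The one genuine difference is how the membership criterion is established. You derive ``$(x,y)\in C_r$ iff $x-y\equiv 3r\pmod d$'' directly by unwinding the identification $(i,nl+r)=(\hat i-nh,r)$, so that $(x,y)\in C$ iff $3\mid(x-y)$, which is well defined because $3\mid d$. The paper instead proves only the easy direction of this equivalence and obtains the converse by a counting argument (the $C_r$ are pairwise disjoint of size $ml/d$, so $|C|=ml/3$, which matches the number of vertices satisfying the congruence). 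Your version is slightly cleaner in that the cardinality computation is demoted to a consistency check rather than a logical ingredient, and your uniqueness argument --- the three candidate dominators $(x,y)$, $(x-1,y)$, $(x,y-1)$ have differences $t$, $t-1$, $t+1$, three consecutive integers of which exactly one is divisible by $3$ --- packages existence and uniqueness together more tidily than the paper's separate treatment of independence and of the two subcases $c=\pm1$.

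One slip to fix in the domination step: your two nonzero residue classes are interchanged. If $t=x-y\equiv 1\pmod 3$ you must take $(i,j)=(x-1,y)$, so that $i-j=t-1\equiv 0$; if $t\equiv 2\pmod 3$ you must take $(i,j)=(x,y-1)$, so that $i-j=t+1\equiv 0$. As written, your case labels contradict your own verifications (you assert $t-1\equiv 0$ under the hypothesis $t\equiv 2$). This is a transcription error rather than a gap in the method: swapping the two labels repairs it, and the rest of the plan, including the well-definedness of $x-y$ modulo $d$ under the identification, is sound.
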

\begin{proof}
	For fixed integers $h,l,m$, let $b={\rm gcd}(l-h,m)$. Then $3 \mid b$. Denote $C=\cup_{r\in[b/3]}C_r$. By Remark \ref{1}, $l$ is the minimal positive integer such that $(i,j+l)\in \{{(i',j) \mid i' \in \mathbb{Z}_m}\}$ for each $i \in \mathbb{Z}_m$ and $j \in [l]$. In view of Construction \ref{construcution}, one has $(3r+j+l-h,j)=(3r+j+l,j+l)$,~$(3r+j,j) \in C_r$. Since the order of $l-h$ in $\mathbb{Z}_m$ is $m/b$, we get $|C_r|=ml/b$ for all $r\in [b/3]$.
	
	Suppose $C_r \cap C_s \neq \emptyset$ for distinct $r,s \in [b/3]$. Since $\Gamma_{m,l,h}$ is vertex transitive from Proposition \ref{pro1}, we may assume $r=0$ and $(0,0)\in C_0 \cap C_s$. It follows from Construction \ref{construcution} that $(0,0)=(3s+nl,nl)=(3s+n(l-h),0)$ for some $n \in \mathbb{Z}$. Then $3s \in (l-h)\mathbb{Z}+m\mathbb{Z}=b\mathbb{Z}$, contrary to the fact that $s \in [b/3] \setminus \{0\}$. Thus, $|C|=ml/3$.
	
	We claim that $(i,j) \in C$ if and only if there exists $r \in [b/3]$ such that $\hat{i}-3r \equiv j~({\rm mod}~b)$.
	If $(i,j)\in C$, then $\hat{i}-3r\equiv j~({\rm mod}~b)$ for some $r\in[b/3]$. Now suppose $\hat{i}-3r\equiv j~({\rm mod}~b)$ for some $r\in [b/3]$ and $(i,j)\in\mathbb{Z}_m\times [l]$. Note that there are exactly $m/b$ elements $i\in\mathbb{Z}_m $ satisfying $\hat{i}-3r\equiv j~({\rm mod}~b)$ for each $j\in [l]$ and $r\in [b/3]$. Since $|C|=ml/3$, $(i,j)\in C$ whenever  $\hat{i}-3r\equiv j~({\rm mod}~b)$ for some $r\in [b/3]$.
	
	Note that $C$ is an independent set.
	Let $(i,j)\in(\mathbb{Z}_m\times[l])\setminus C$. Then $\hat{i}-3r\not\equiv j~({\rm mod}~b)$ for $r \in [b/3]$. It follows that $\hat{i}-3r+c\equiv j~({\rm mod}~b)$ for some $r\in [b/3]$ with $c =\pm1 $.
	
	 If $c=-1$, from the claim, then	$\hat{i}-1-3r\equiv j~({\rm mod}~b)$, which implies that $(i-1,j)$ is the unique neighbour of $(i,j)$ in $C$. Now we consider the case $c=1$. If $j\neq 0$, from the claim, then $\hat{i}-3r\equiv j-1~({\rm mod}~b)$, which implies that $(i,j-1)$ is the unique neighbour of $(i,j)$ in $C$. Suppose $j=0$. Since $\hat{i}-3r\equiv-1~({\rm mod}~b)$ and $b\mid l-h$, we get
	$\hat{i}+h-3r\equiv l-1~({\rm mod}~b)$, which implies that $(i+h,l-1)$ is the unique neighbour of $(i,0)$ in $C$.
	
	This completes the proof of this proposition.
\end{proof}

\section{Proofs of Theorems \ref{main} and \ref{main2}}

In this section, we always assume that $\Gamma=\Cay(G,S)$ admits a perfect code $C$, where $S$ is a generating set of
$G$ with $S=\{s,s'\}$. Let $x(i,j)=is+js'$ for all integers $i,j$, where the first coordinate could be read modulo $o(s)$ and the second coordinate could be read modulo $o(s')$. For all $x(i,j)\in G$, we have
\begin{align}
	N^{+}(x(i,j))&=\{x(i+1,j),x(i,j+1)\},\label{neighbors2}\\
	N^{-}(x(i,j))&=\{x(i-1,j),x(i,j-1)\}.\label{neighbors3}
\end{align}

To give the proofs of Theorems \ref{main} and \ref{main2}, we need four auxiliary lemmas.

\begin{lem}\label{lem1}
	 The generating set $S$ has no involution.
\end{lem}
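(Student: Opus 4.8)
The plan is to show that if $S = \{s, s'\}$ contains an involution, then the perfect code $C$ cannot exist, contradicting our standing assumption. So suppose, toward a contradiction, that $s$ (say) is an involution, i.e. $o(s) = 2$ and $s = -s$. The key observation is that in this case, for any vertex $x$, the two out-neighbours are $x + s$ and $x + s'$, while the two in-neighbours are $x - s = x + s$ and $x - s'$. Thus $x + s \in N^{+}(x) \cap N^{-}(x)$: the vertex $x$ and the vertex $x+s$ form a "digon", each dominating the other. I would exploit this digon structure together with the defining property of a perfect code (every vertex dominated by exactly one code vertex).

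The main step is a short local case analysis at the pair $\{x, x+s\}$. Pick any $x \in G$. Since $C$ is a dominating set, $x$ is dominated by some $c \in C$; since $C$ is a perfect code, this $c$ is unique, and similarly $x+s$ is dominated by a unique code vertex. I would argue that whether or not $x$ or $x+s$ lies in $C$, a contradiction arises. If $x \in C$, then $x$ dominates both $x$ and $x+s$; but then no other code vertex may dominate $x+s$, in particular $x+s \notin C$ (else $x+s$ would be dominated by itself as well as by $x$), and $x + 2s' = x + s' + s'$... — more cleanly: consider the vertex $x + s'$. It is an out-neighbour of $x$, hence dominated by $x \in C$ only if... no, $x+s'$ need not be dominated by $x$. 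Let me instead use a counting/parity argument: partition $G$ into the $o(s)/1$... since $o(s)=2$, the cosets of $\langle s \rangle$ have size $2$, giving a partition of $G$ into pairs $\{y, y+s\}$, each forming a digon.

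Here is the cleaner route I would actually carry out. Each digon $\{y, y+s\}$ is a clique-like structure in the domination sense: $y$ dominates $y+s$ and vice versa. I claim each such pair contains at most one element of $C$: if both $y, y+s \in C$, then $y$ is dominated by itself and by $y+s$, two code vertices, contradicting perfectness. Next, I claim each such pair contains at least one element of $C$, or else both $y$ and $y+s$ must be dominated from outside the pair; this is possible a priori, so instead I would count incidences. Let $|G| = n$ (even). The code $C$ has the property that the closed out-neighbourhoods $N^{+}[c] := \{c\} \cup N^{+}(c)$ for $c \in C$ partition $G$; since $\Gamma$ is $2$-valent, $|N^{+}[c]| = 3$ unless there is coincidence among $c, c+s, c+s'$. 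With $s$ an involution we have $c+s \ne c$ and $c + s \ne c + s'$ (as $s \ne s'$ and $s' \ne 0$), and $c + s' \ne c$; so $|N^{+}[c]| = 3$ for every $c$, forcing $3 \mid n$ and $|C| = n/3$. Now double count arcs from $C$ into the digon structure, or simply observe: the unique code vertex dominating $y$ and the unique one dominating $y+s$ — using $N^{+}[y] \ni y+s$ and $N^{-}$ symmetry — I would derive that the map sending a digon to "its" dominating configuration is inconsistent with $|C| = n/3$ when $n$ is even but we also need the digons ($n/2$ of them) to be covered, ultimately producing an arithmetic contradiction between $3 \mid n$ and the forced structure on pairs.

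The hard part, and the step I would spend the most care on, is converting the local digon observation into a clean global contradiction without hand-waving — i.e. pinning down exactly why the existence of the involution-induced digons is incompatible with a perfect partition of $G$ by size-$3$ out-balls. I expect the slick finish is: each code vertex $c$ has $N^{+}[c] = \{c, c+s, c+s'\}$, and since $c + s$ is then dominated by $c$, the digon partner $c+s$ is a non-code vertex whose unique dominator is $c$; but $c+s$ also lies in $N^{-}[c] = \{c, c-s, c-s'\} = \{c, c+s, c-s'\}$, so $c$ is an out-neighbour of $c+s$, meaning $c$ is dominated by $c+s$ — yet $c$ is also dominated by itself, two distinct code vertices dominating $c$ once we note $c + s \ne c$. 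Wait, that requires $c+s \in C$, which we just excluded. The genuine contradiction: $c$ dominates $c+s$, and $c+s$ dominates $c$; but who dominates $c+s$ if not $c$? Only $c$ (perfectness). Fine. Now who dominates $c$? It is $c$ itself. And $c + s$ dominates $c$ too (since $c \in N^{+}(c+s)$ as $s$ is an involution). So $c$ is dominated by $c$ and by $c+s$ — two vertices — and for this not to violate perfectness we'd need $c + s \notin C$, which holds, so no contradiction there, only the requirement $c+s \notin C$ for all $c \in C$. Assembling: $C$ is an independent-ish set avoiding all its digon-partners, $|C| = n/3$, and every vertex not in $C$ is dominated exactly once. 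I would then locate the contradiction in the second coordinate direction (the $s'$-arcs), showing the $n/3$ out-balls cannot tile $G$ while respecting the rigid $s$-involution structure; concretely, restricting to a coset of $\langle s, s' \rangle = G$ and tracking parities of the $s$-coordinate across the tiling forces $n/3$ to have a parity incompatible with $3 \mid n$ and $2 \mid n$. This parity bookkeeping is the crux; the rest is routine.
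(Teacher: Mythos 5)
Your proposal establishes some correct preliminary facts (each digon pair $\{y,y+s\}$ contains at most one code vertex; the closed out-neighbourhoods $N^{+}[c]$ have size $3$ and tile $G$, so $3\mid |G|$ and $|C|=|G|/3$), and the $3\mid|G|$ observation would in fact dispose of the case where \emph{both} generators are involutions (there $G=\langle s,s'\rangle\cong\mathbb{Z}_2\times\mathbb{Z}_2$ has order $4$), which is essentially how the paper handles that subcase. But for the main case of exactly one involution you never actually reach a contradiction: as you yourself concede midway, the digon structure only yields the requirement $c+s\notin C$ for all $c\in C$, which is perfectly consistent. The promised finish --- ``tracking parities of the $s$-coordinate \dots forces $n/3$ to have a parity incompatible with $3\mid n$ and $2\mid n$'' --- is never carried out, and as stated it cannot work: there is no incompatibility between $2\mid n$ and $3\mid n$ (take $n=6$), so no parity bookkeeping of this kind can close the argument. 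This is a genuine gap, not a routine detail.

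The missing idea is the diagonal propagation used in the paper (and isolated later as Lemma 3.2): if $c\in C$ then $N^{+}(c)=\{c+s,c+s'\}$ is disjoint from $C$, and since this same set equals $N^{-}(c+s+s')$, the vertex $c+s+s'$ has no in-neighbour in $C$ and hence must itself lie in $C$. Now suppose $o(s')=2$ and $c=x(0,0)\in C$. Then $x(1,1)=c+s+s'\in C$, and the in-neighbours of $x(1,0)=c+s$ are $x(0,0)$ and $x(1,-1)=x(1,1)$ (using $s'=-s'$), so $c+s$ is dominated by two distinct code vertices --- the contradiction. Note that this is a purely local three-line argument; no global counting or parity analysis is needed. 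I would recommend replacing the entire second half of your write-up with this step.
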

\begin{proof}
	Assume the contrary, namely, $S$ has at least one involution. If $o(s)=o(s')=2$ with $s \neq s' $, then  $\Cay(G,S)$ is an undirected graph with $|G|=4$, contrary to the fact that $3\mid |G|$ from \cite[Proposition 2.1]{Vu} and \cite[Lemma 2.3]{DYP2}. Thus, $S$ has exactly one involution.

	 Without loss of generality, we may assume $o(s')=2$. Since $\Gamma$ is vertex transitive, we may assume $x(0,0) \in C$. By the definition of a perfect code, we have $N^{+}(x(0,0)) \cap  C =\{x(0,1),x(1,0)\} \cap C=\emptyset$ from \eqref{neighbors2}. It follows that $x(0,1),x(1,0) \notin C$. Since $N^{-}(x(1,1))\cap C=\{x(0,1),x(1,0)\}\cap C=\emptyset$ from \eqref{neighbors3}, one gets $x(1,1) \in C$, contrary to the fact that $ N^{-}(x(1,0)) \cap C =\{x(1,1),x(0,0)\}$.

    Thus, $S$  has no involution.
\end{proof}

\begin{lem}\label{lem2}
	 If $x(i,j)\in C$, then $x(i+l,j+l)\in C$ for each integer $l$.
\end{lem}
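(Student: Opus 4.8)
The plan is to reduce the statement to its one-step version, namely that $x(i,j)\in C$ implies $x(i+1,j+1)\in C$, and then to prove that version by a short counting argument based on the defining property of a perfect code. First I would note that $x(i+l,j+l)=x(i,j)+l(s+s')$, so $x(i+l,j+l)$ depends only on the residue of $l$ modulo $o(s+s')$; hence it suffices to treat $l\geq 0$, and for those the one-step version yields the claim by induction on $l$ (with $l=0$ trivial, and the case $s+s'=0$ making the whole statement vacuous since then $x(i+l,j+l)=x(i,j)$).

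For the one-step version, since $\Gamma$ is a Cayley digraph it is vertex-transitive, so as in the proof of Lemma \ref{lem1} I may assume $x(i,j)=x(0,0)=0\in C$; the goal becomes $x(1,1)\in C$. Because $S=\{s,s'\}$ consists of two distinct non-identity elements of $G$, the vertices $x(1,0)=s$ and $x(0,1)=s'$ are both different from $0$, and by \eqref{neighbors2} they lie in $N^{+}(0)$. Thus each of them is dominated by $0\in C$; since $C$ is a perfect code, $0$ is then the \emph{unique} vertex of $C$ dominating each of them, and as neither equals $0$ we conclude $x(1,0),x(0,1)\notin C$.

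Finally I would examine the vertex $x(1,1)$. By \eqref{neighbors3} its in-neighbours are exactly $x(0,1)$ and $x(1,0)$, so the only vertices that dominate $x(1,1)$ are $x(1,1)$ itself together with these two in-neighbours. The latter two are not in $C$ by the previous step, while every vertex must be dominated by some vertex of $C$; hence $x(1,1)\in C$, which completes the one-step version and therefore the lemma.

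I do not expect a real obstacle here: the argument is a direct application of the perfect-code condition together with the descriptions \eqref{neighbors2}--\eqref{neighbors3} of in- and out-neighbours. The only mild points of care are the bookkeeping for negative $l$, handled by reducing modulo $o(s+s')$, and checking that the degenerate case $s+s'=0$ causes no trouble because the assertion is then immediate.
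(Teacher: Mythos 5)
Your proposal is correct and follows essentially the same route as the paper: both reduce to the one-step claim via vertex-transitivity, observe that the out-neighbours $x(1,0),x(0,1)$ of the code vertex $x(0,0)$ cannot lie in $C$, note that these are exactly the in-neighbours of $x(1,1)$, and conclude $x(1,1)\in C$ from the domination condition before inducting. Your extra bookkeeping for negative $l$ via periodicity of $s+s'$ is a harmless refinement of the paper's terser ``by induction.''
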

\begin{proof}
	Since $\Gamma$ is vertex transitive, we may assume $(i,j)=(0,0)$. According to the definition of a perfect code, one gets $N^{+}(x(0,0))\cap C =\emptyset$. By \eqref{neighbors2} and \eqref{neighbors3}, we get $N^{+}(x(0,0))=N^{-}(x(1,1))$, which implies $N^{-}(x(1,1)) \cap C=\emptyset$. It follows from the definition of a perfect code that $x(1,1) \in C$. By induction, one gets $x(l, l) \in C $ for each integer $l$.
	
	This completes the proof of this lemma.
\end{proof}

\begin{lem}\label{lem3}
	Let $x(i,j)\in C$. The following hold:
	\begin{itemize}
		\item[{\rm (i)}] $\{x(i+1,j),x(i+2,j),x(i,j+1),x(i,j+2)\}\cap C=\emptyset$;
		
		\item[{\rm (ii)}] $\{x(i+3,j),x(i,j+3) \} \in C$.
	\end{itemize}
\end{lem}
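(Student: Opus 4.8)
The plan is to prove both parts of Lemma~\ref{lem3} directly from the definition of a perfect code together with the local structure of $N^{+}$ and $N^{-}$ recorded in~\eqref{neighbors2} and~\eqref{neighbors3}, using Lemma~\ref{lem2} as the bridge between the two parts. By vertex-transitivity of $\Gamma$, it suffices to treat the case $x(i,j)=x(0,0)\in C$; this is the normalization I would invoke at the outset.

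For part~(i), since $x(0,0)\in C$ and $C$ is a perfect code, $N^{+}(x(0,0))\cap C=\emptyset$, so by~\eqref{neighbors2} neither $x(1,0)$ nor $x(0,1)$ lies in $C$. This already disposes of two of the four vertices. For $x(2,0)$: observe that $N^{-}(x(2,0))=\{x(1,0),x(2,-1)\}$ by~\eqref{neighbors3}. If $x(2,0)\in C$, then since $x(1,0)\notin C$ we would need $x(2,-1)\notin C$ as well — but then $x(2,0)$ has \emph{no} in-neighbour in $C$ and is not itself the dominator only if $x(2,0)\notin C$, a contradiction; more cleanly, I would instead argue via the vertex $x(1,0)$, which lies outside $C$ and must therefore be dominated by exactly one vertex of $C$ among $N^{-}(x(1,0))=\{x(0,0),x(1,-1)\}$. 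Since $x(0,0)\in C$, we get $x(1,-1)\notin C$. Dually, $x(-1,1)\notin C$. Now $N^{-}(x(2,0))=\{x(1,0),x(2,-1)\}$: if $x(2,0)\in C$ then $x(1,0)$ is dominated only by $x(0,0)\in C$, fine, but consider $x(2,-1)$ — hmm, the cleanest route is: $x(2,0)\in C$ would force, by the same argument applied at $x(2,0)$, that $x(1,0)\notin C$ (known) and nothing contradictory yet, so I should instead use that $x(1,0)$ is dominated by $x(0,0)$, hence by no other vertex, in particular not by $x(1,-1)$; then apply Lemma~\ref{lem2} to $x(2,0)$: if $x(2,0)\in C$ then $x(3,1)\in C$ and more importantly $x(1,-1)\in C$, contradicting $x(1,-1)\notin C$. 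Symmetrically $x(0,2)\notin C$ using Lemma~\ref{lem2} backwards to pull $x(0,2)$ to $x(-1,1)$. This establishes~(i).

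For part~(ii) I would show $x(3,0)\in C$ (the claim $x(0,3)\in C$ being symmetric). Consider the vertex $x(2,0)$: by~(i) it is not in $C$, so it is dominated by exactly one vertex of $N^{-}(x(2,0))\cup\{x(2,0)\}=\{x(1,0),x(2,-1),x(2,0)\}$; since $x(1,0),x(2,0)\notin C$ (by~(i)), we must have $x(2,-1)\in C$. By Lemma~\ref{lem2}, $x(2,-1)\in C$ implies $x(3,0)\in C$. Symmetrically, from $x(0,2)\notin C$ and $N^{-}(x(0,2))=\{x(-1,2),x(0,1)\}$ with $x(0,1)\notin C$, we get $x(-1,2)\in C$, and Lemma~\ref{lem2} gives $x(0,3)\in C$.

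The main obstacle I anticipate is keeping the bookkeeping of which translates $x(a,b)$ are forced in and which forced out straight, since several of the exclusions in~(i) (specifically for $x(2,0)$ and $x(0,2)$) are not immediate from~\eqref{neighbors2} alone but require first deriving the auxiliary facts $x(1,-1),x(-1,1)\notin C$ from the perfect-code condition at the out-neighbours of $x(0,0)$, and then combining with Lemma~\ref{lem2}. I would state these auxiliary exclusions as an explicit intermediate observation before proving~(i) proper, so that the logical dependency on Lemma~\ref{lem2} is transparent and the symmetry between the two coordinates can be exploited to halve the work.
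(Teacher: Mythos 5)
Your proof is correct and follows essentially the same route as the paper: normalize to $x(0,0)\in C$ by vertex-transitivity, rule out the out-neighbours directly from the perfect-code condition, and then force $x(2,-1)\in C$ (hence $x(3,0)\in C$ via Lemma~\ref{lem2}) from the unique domination of $x(2,0)$. The only difference is cosmetic, in how $x(2,0)$ is excluded --- the paper observes that $x(2,1)$ would otherwise be dominated by both $x(1,1)$ and $x(2,0)$, while you pull $x(2,0)$ back to $x(1,-1)$ via Lemma~\ref{lem2}; both work, though you should delete your abandoned first attempt, whose ``contradiction'' is spurious because a code vertex dominates itself.
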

\begin{proof}
	Since $\Gamma$ is vertex transitive, we may assume $(i,j)=(0,0)$. By Lemma \ref{lem2}, we have $x(1,1) \in C$. Since $C\cap N^{+}(x(h,h))=\emptyset$ for $h\in \{0,1\}$ from \eqref{neighbors2}, we get $x(2,1),x(1,0) \notin C$. The fact $N^{+}(x(1,1))\cap N^{+}(x(2,0))=\{x(2,1)\}$ implies $x(2,0)\notin C$. In view of the symmetry of $\Gamma$, (i) is valid.
	
	By the definition of a perfect code, we have $|N^{-}(x(2,0))\cap C|=1$. Since $x(1,0)\notin C$, from \eqref{neighbors3}, one gets $x(2,-1) \in C$. By Lemma \ref{lem2}, we get $x(3,0) \in C$. In view of the symmetry of $\Gamma$, (ii) is valid.
\end{proof}

\begin{lem}\label{lem4}
	Let $x(0,0)=x(h, l)$ for some $(h, l) \in\{(i, j) \mid 0 \leq   i \leq o(s)$ and $\left.0 \leq j \leq o\left(s^{\prime}\right)\right\}$. Then $3 \mid(l-h)$.
\end{lem}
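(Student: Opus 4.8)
The plan is to determine precisely which group elements of the form $x(i,j)$ lie in the perfect code $C$, and then to read off the claim from the hypothesis $x(h,l)=x(0,0)$. As in the preceding lemmas, and because $\Gamma$ is vertex transitive, we may assume $x(0,0)\in C$.

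The first step is to show that $x(i,j)\in C$ for every pair $(i,j)\in\mathbb{Z}^2$ with $i\equiv j\pmod 3$. By Lemma \ref{lem2}, $x(t,t)\in C$ for all $t\in\mathbb{Z}$. Given $(i,j)$ with $i\equiv j\pmod 3$, choose $t\in\mathbb{Z}$ with $t\le\min\{i,j\}$ and $t\equiv i\pmod 3$, and write $i=t+3a$, $j=t+3b$ with $a,b\ge 0$. Starting from $x(t,t)\in C$ and applying Lemma \ref{lem3}(ii) first $a$ times to raise the first coordinate by $3$ each time and then $b$ times to raise the second coordinate, one arrives at $x(i,j)=x(t+3a,t+3b)\in C$.

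The second step is to show that $x(i,j)\notin C$ whenever $i\not\equiv j\pmod 3$. If $i\equiv j+1\pmod 3$, then $(i-1)-j\equiv 0\pmod 3$, so $x(i-1,j)\in C$ by the first step, and Lemma \ref{lem3}(i) applied to $x(i-1,j)$ yields $x(i,j)=x((i-1)+1,j)\notin C$. If $i\equiv j+2\pmod 3$, then $(i-2)-j\equiv 0\pmod 3$, so $x(i-2,j)\in C$, and Lemma \ref{lem3}(i) again yields $x(i,j)=x((i-2)+2,j)\notin C$. Hence $x(i,j)\in C$ if and only if $i\equiv j\pmod 3$; applying this to $x(h,l)=x(0,0)\in C$ forces $h\equiv l\pmod 3$, that is, $3\mid(l-h)$.

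I expect the only delicate point to be the first step: Lemma \ref{lem3}(ii) propagates membership in $C$ only in the direction of increasing coordinates, so to reach a pair $(i,j)$ one of whose coordinates is small (in particular negative, which does occur when $h$ or $l$ equals $0$) one must first travel along the whole diagonal via Lemma \ref{lem2} and only afterwards push outward using Lemma \ref{lem3}(ii). Once that is in place, everything else is bookkeeping modulo $3$.
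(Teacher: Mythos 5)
Your proof is correct and follows essentially the same route as the paper's: both rest on Lemma \ref{lem2} to place the whole diagonal in $C$ and on Lemma \ref{lem3} to propagate membership in steps of $3$ and to exclude the intermediate vertices. You package this as a full characterization of which pairs $(i,j)$ satisfy $x(i,j)\in C$, whereas the paper extracts only the single contradiction that $x(l-h,0)$ and $x(l-h+b,0)$ would be adjacent vertices both lying in $C$, but the underlying argument is the same.
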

\begin{proof}
	Since $x(0,0)=x(h,l)$, we have $x(i,j)=x(i-h,j-l)$ for all $x(i,j) \in V(\Gamma)$.
	
	Suppose $3 \nmid l-h$. Without loss of generality, we may assume $l>h$. It follows that $3 \mid l-h+b$ for some $b =\pm1 $. 	Since $\Gamma$ is vertex transitive, from Lemma \ref{lem2}, we may assume $x(i,i) \in C$ for each integer $i$. Then $x(l-h,0)=x(l,l) \in C$. Since $x(0,0) \in C$, by Lemma \ref{lem3}, we have $x(l-h+b,0) \in C$, contrary to the fact that $(x(l-h,0),x(l-h+b,0))$ or $(x(l-h+b,0),x(l-h,0)) \in A(\Gamma)$.
\end{proof}
Now we are ready to give a proof of Theorem \ref{main}.

\begin{proof}[Proof of Theorem~\ref{main}]
	The proof of the sufficiency is straightforward by Propositions \ref{pro1} and \ref{pro2}.
	
	Now we prove the necessity. We may assume that $\Gamma$ admits a perfect code. Let $l$ be the minimal positive integer with $x(0,0)=x(h,l)$ for some $h\in [o(s)]$. Since $x(o(s),0)=x(0,0)=x(h,l)=x(h-l,0)$, from Lemma \ref{lem4}, we get $3\mid o(s)$ and $3 \mid l-h$. 	
	
 Since $\Gamma$ is strongly connected, by Construction \ref{construcution}, $\Gamma $ is isomorphic to $\Gamma_{o(s),l,h}$. By Proposition \ref{pro2}, $\Gamma$ is a Cayley digraph on an abelian group.
	
	This completes the proof of the necessity.
\end{proof}

Now we prove Theorem \ref{main2}.

\begin{proof}[Proof of Theorem~\ref{main2}]
	Without loss of generality,  we may assume that $x(0,0)\in C$. Let $l$ be the minimal positive integer with $x(0,l)\in\langle s\rangle$. Then there exists $h \in [o(s)]$ such that $x(h,l)=x(0,0)$. It follows that $x(i,j)=x(i-h,j-l)$ for all $i,j\in\mathbb{Z}$.
	
	Let $H$ be a subset of $\langle s\rangle$ such that $is\in H$ whenever there exists integer $j$ satisfying $x(j,j)=x(i,0)$. We claim that $H$ is a subgroup of $\langle s\rangle$. Since $0\in H$, $H$ is not empty. Let $is,i's\in H$. Then there exist integers $j, j'$ such that $x(j,j)=x(i,0)$ and $x(j',j')=x(i',0)$. It follows that $x(j-j',j-j')=x(i-i',0)$, and so $(i-i')s\in H$. Thus, our claim is valid.
	
	Note that $is\in H$ if and only if there exists integer $j$ satisfying $l\mid j$ and $x(j,j)=x(i,0)$. Since $x(l,l)=x(l-h,0)$, from the claim, one has $H=\langle(l-h)s\rangle$. Since $H$ has ${\rm gcd}(l-h,m)$ cosets in $\langle s\rangle$, by Lemma \ref{lem3}, we get $x(3r,0)+H\subseteq C$ for all $1\leq r \leq {\rm gcd}(l-h,m)/3-1$. It follows from Lemma \ref{lem2} and Proposition \ref{pro2} that the desired result is valid.
\end{proof}

	\bigskip
	\noindent \textbf{Acknowledgements}~~
	We are grateful to the referees for useful comments and suggestions. Y. Yang is supported by the National Natural Science Foundation of China (12101575, 52377162) and the Fundamental Research
	Funds for the Central Universities (2652019319). X. Ma's research is supported by National Natural Science Foundation of China (Grant No. 11801441, 12326333, 61976244), and Shaanxi Fundamental Science Research Project for Mathematics and Physics (Grant No. 22JSQ024).
	
	\section*{Data Availability Statement}
	
	Data sharing not applicable to this article as no datasets were generated or analysed during the current study.

	\section*{Declarations}
	
	{\bf Conflict of interest} The authors declare that they have no conflict of interest.


\begin{thebibliography}{99}
			
		
		\bibitem{CMO}
		C. \c{C}al{\i}\c{s}kan, \v{S}. Miklavi\v{c}, S. \"{O}zkan. \newblock Domination and efficient domination in cubic and quartic Cayley graphs on abelian groups. \newblock\emph{Discrete Appl. Math.}, 271:~15--24, 2019.
		
		\bibitem{CMO22} 	C. \c{C}al{\i}\c{s}kan, \v{S}. Miklavi\v{c}, S. \"{O}zkan. \newblock Efficient domination in Cayley graphs of generalized dihedral groups. \newblock\emph{Discuss. Math. Graph Theory}, 42:~ 823-841, 2022.
		
		
		
		\bibitem{CWZ} J. Chen, Y. Wang, B. Xia. \newblock Characterization of subgroup perfect codes in Cayley graphs. \newblock\emph{Discrete Math.}, 343:~111813, 2020.
				
		
		\bibitem{DeS}
		I.J. Dejter, O. Serra. \newblock Efficient dominating sets in Cayley graphs. \newblock\emph{Discrete Appl. Math.}, 129:~319--328, 2003.
		
	
		
		\bibitem{DYP2} Y.-P. Deng. \newblock Efficient dominating sets in circulant graphs with domination number prime. \newblock\emph{Inform. Process. Lett.}, 114:~700--702, 2014.
		
		\bibitem{DYP} Y.-P. Deng, Y.-Q. Sun, Q. Liu, H.-C. Wang. \newblock Efficient dominating sets in circulant graphs. \newblock\emph{Discrete Math.}, 340:~1503--1507, 2017.
		
		\bibitem{FHZ} R. Feng, H. Huang, S. Zhou. \newblock Perfect codes in circulant graphs. \newblock\emph{Discrete Math.}, 340:~1522--1527, 2017.
		
		
		
		\bibitem{HXZ18} H. Huang, B. Xia, S. Zhou. \newblock Perfect codes in Cayley graphs. \newblock\emph{SIAM J. Discrete Math.}, 32:~548--559, 2018.
		
		
		
		
	 	\bibitem{KLS} Y.S. Kwon, J. Lee, M.Y. Sohn. \newblock Classification of efficient dominating sets of circulant graphs of degree $5$. \newblock\emph{Graph Combin.}, 38:~Paper 120, 2022.
	 	\bibitem{Le} J. Lee. \newblock Independent perfect domination sets in
	 	Cayley graphs. \newblock\emph{J. Graph Theory}, 37:~213--219, 2001.
	 	

		
		\bibitem{MWWZ} X. Ma, G.L. Walls, K. Wang, S. Zhou. \newblock Subgroup perfect codes in Cayley graphs.  \newblock\emph{SIAM J. Discrete Math.},
		34:~1909--1921, 2020.
		
		\bibitem{Mo}  M. Mollard. \newblock On perfect codes in Cartesian products of graphs. \newblock\emph{European J. Combin.}, 32:~398--403, 2011.
		
		\bibitem{OPR} N. Obradovi\'{c}, J. Peters, G. Ru\v{z}i\'{c}. \newblock Efficient domination in circulant graphs with two chord lengths. \newblock\emph{Inform. Process. Lett.}, 102:~253--258, 2007.
		
		\bibitem{Vu} D.T. Vuza.
		\newblock Supplementary sets and regular complementary unending canons (part one). \newblock\emph{Perspect. New Music}, 29:~22--49, 1991.
		\bibitem{YYF16} Y. Yang, B. Lv, K. Wang. \newblock Weakly distance-regular digraphs of valency three, I,
		\newblock\emph{Electron. J. Combin.}, 23:~Paper 2.12, 2016. 
		
			\bibitem{5du} Y. Yang, X. Ma, Q. Zeng. Perfect codes in quintic Cayley graphs on abelian groups, \newblock\emph{arXiv}: 2207.06743.
	
		
		\bibitem{ZZ20} J. Zhang, S. Zhou. \newblock On subgroup perfect codes in Cayley graphs. \newblock\emph{European J. Combin.}, 91:~103228, 2021.
		
		\bibitem{ZZ21} J. Zhang, S. Zhou. \newblock Corrigendum to ``On subgroup perfect codes in Cayley graphs'' [European J. Combin. 91 (2021)
		103228]. \newblock\emph{European J. Combin.}, 101:~103461, 2022.
		
		
		\bibitem{Z15}
		S. Zhou. \newblock Cyclotomic graphs and perfect codes. \newblock\emph{J. Pure Appl. Algebra}, 223:~931--947, 2019.



	\end{thebibliography}
\end{document}